\documentclass[preprint,authoryear,a4paper]{amsart}
\usepackage{mathrsfs}
\usepackage{amssymb,url,color}
\usepackage{amsfonts}
\usepackage{amsmath,amsthm,xspace,enumerate,amscd,graphicx}
\usepackage[pdftex]{hyperref}
\usepackage[active]{srcltx}
\newtheorem{theorem}{Theorem}
\newtheorem{remark}[theorem]{Remark}
\newtheorem{lemma}[theorem]{Lemma}
\newtheorem{proposition}[theorem]{Proposition}

\renewcommand{\a}{\ensuremath{\alpha}}

\newcommand{\N}{\mathbb{N}}
\newcommand{\Z}{\ensuremath{\mathbb{Z}}\xspace}

\renewcommand{\phi}{\varphi}

\renewcommand{\leq}{\leqslant}
\renewcommand{\geq}{\geqslant}

\def\M{\mathcal{M}}
\def\P{\mathcal{P}}



\newcommand{\F}{\mathbb{F}}
\newcommand{\ts}{\times}

\begin{document}

\title
{The probability of rectangular unimodular matrices over
$\F_q[x]$ }

\author[X. Guo]{Xiangqian Guo}

\address[X. Guo]{Department of Mathematics,
Zhengzhou University,  Henan Province, 450001,
China.}
\email{\href{mailto: X. Guo
<xqguo@zzu.edu.cn>}{guoxq@zzu.edu.cn}}

\author[G. Yang]{Guangyu Yang}

\address[G. Yang]{Department of Mathematics, Zhengzhou University,  Henan Province, 450001,
China.}\email{\href{mailto: G. Yang
<study\_yang@yahoo.com.cn>}{study\_yang@yahoo.com.cn}}

\begin{abstract}
In this note, we show that the probability that
a uniformly random $k\times n$ matrix over $\F_q[x]$
can be extended to an $n\times n$ invertible matrix
is
$(1-q^{k-n})(1-q^{k-1-n})\cdots(1-q^{1-n})$.
Connections with Dirichlet's density theorem on co-prime
integers and its various generalizations are also presented.



%
\end{abstract}

\keywords{Natural density, unimodular matrices, Riemann's zeta function, $q$-zeta function, finite field, polynomial ring.}

\subjclass[2000]{11C99, 15B33, 60B15}

\maketitle

\section{introduction and main results}

For any set $A$ of positive integers, its {\bf natural density} is
defined as
\begin{align}
D(A):=\lim_{N\rightarrow\infty}\frac{|A\cap\{1,2,\ldots,N\}|}{N},
\end{align}
provided the limit exists, where $|\cdot|$ denotes the cardinality
of the corresponding set. Dirichlet \cite{Di} discovered an
interesting density theorem that asserts the probability that two
integers are co-prime is $6/\pi^2$, that is,
\begin{align}\label{dirichlet}
\lim_{N\rightarrow\infty}\frac{|\{(m,n)\in\mathbb{N}^2:\, 1\leq
m,n\leq N, \gcd(m,n)=1\}|}{N^2}=\zeta(2)^{-1}=\frac{6}{\pi^2},
\end{align}
where $\gcd(m,n)$ denotes the greatest common divisor of $m$ and
$n$, and $\zeta(s)$ is the Riemann's zeta function. Moreover, the
probability that $n$ integers are co-prime is
\begin{align}
 \lim_{N\rightarrow\infty}\frac{|\{(m_1,\ldots,m_n)\in\mathbb{N}^n:\,
1\leq m_1,\ldots,m_n\leq
N,\gcd(m_1,\ldots,m_n)=1\}|}{N^n}=\zeta(n)^{-1}.
\end{align}
Kubota and Sugita \cite{KuSu} gave a rigorous probabilistic interpretation to
Dirichlet's density theorem. For the deep links between probability theory and number theory, please refer to Tenenbaum \cite{Te}, Kubilius \cite{Ku} and Kac \cite{Kac}.

Denote by $M_{k\times n}(R)$ the set of all $k\times n$ matrices
over a ring $R$. A matrix $A\in M_{k\times n}(R)$ is called {\bf
unimodular} if it can be extended to an $n\times n$ invertible
matrix. Note that, for the commutative ring $R$ with $1$, this is equivalent to saying that $A$ can be extended to an $n\times n$ matrix with determinant $1$ in case $k<n$.
Using the concept of unimodular, we can give a re-statement
of Dirichlet's density theorem and its generalization: the
probability that a random $1\times n$ integer matrix is unimodular
is $\zeta(n)^{-1}$. Naturally, we can consider the matrix form of
Dirichlet's density theorem: the probability that a random
$k\times n\,(1\leq k\leq n)$ integer matrix is unimodular? The key
is to define the analogous {\it natural density} for the sets of the
integer matrices. Maze, Rosenthal and Wagner \cite{MaRoWa} studied
this problem and obtained that the probability that a random
$k\times n$ matrix is unimodular is
$\prod_{j=n-k+1}^{n}\zeta(j)^{-1}$. Furthermore, \cite{MaRoWa} also
gave some interesting historical remarks for Dirichlet's density
theorem.


It is natural, of course, to ask
the similar question for matrices over
polynomial rings of fields. As remarked in \cite{MaRoWa}, the
concept of natural density does not extend naturally to the ring
$\F[x]$ for a general field $\F$. Fortunately, this can be done for
the finite fields.

Let $\F_q$ be a finite field consisting of $q$ elements, and
$\F_q[x]$ be the polynomial ring over $\F_q$. To enumerate
$\F_q[x]$, let $\Sigma$ be the set of all vectors
$\a=(a_0,a_1,\cdots)$ with $a_i\in\{0,1,\cdots,q-1\}$ and $a_i=0$
for sufficiently large $i$. Then there is a one-to-one map $\chi:
\Sigma \rightarrow \Z_+=\N\cup\{0\}$ defined by
$\chi(a_0,a_1,\cdots)=\sum_{i=0}^{\infty}a_iq^i$. For convenience,
we denote $\a_m=\chi^{-1}(m)$ and $m_\a=\chi(\a)$. Then for all
$m\in\Z_+$, we set
\[
f_m(x):=\sum_{i=0}^{\infty}a_ix^{i},\;\;{\rm
with}\;\;\a_m=(a_0,a_1,\cdots).
\]

By a probabilistic method, Sugita and Takanobu \cite{SuTa}
determined the probability that two polynomials over $\F_q$ are
co-prime {\it when $q$ is a prime}, that is,
\begin{align}\label{SuTa}
\lim_{N\rightarrow\infty}\frac{|\{(m,n)\in\{0,1,\ldots,N-1\}^2:\,
\gcd(f_m,f_n)=1\}|}{N^2}=1-\frac{1}{q}.
\end{align}
Recently, several authors considered the question of the probability
of $n$ polynomials to be co-prime in $\F_q[x]$. Morrison \cite{Mo}
introduced a concept of {\it natural density}
\begin{align}\label{Mo density}
\widetilde{D}(S):=\lim_{d\rightarrow \infty}\frac{|S\cap \mathcal{F}_d|}{|\mathcal{F}_d|},\ \ \ \ \ \ \ \ S\subseteq\F_q[x],
\end{align}
where $\mathcal{F}_d$ is the set of all the polynomials in $\F[x]$
with degree at most $d$. He first treated this problem in a
heuristic way. Then through an elegant as well as rigorous argument,
he deduced the following formula,
\begin{align}\label{Mo}
\lim_{d\rightarrow\infty}\frac{|\{(f_1,\ldots,f_n)\in\F_q^n[x]:
\gcd(f_1,\ldots,f_n)=1,\deg(f_i)\leq d,1\leq i\leq
n\}|}{|\{f\in\F_q[x]:\,\deg(f)\leq d\}|^n}=1-\frac{1}{q^{n-1}}.
\end{align}
Benjamin and Bennett \cite{BeBe} also studied this problem; their
methods are very interesting, using the
Euclidean algorithm which is one of the oldest algorithms. 
Essentially, they also got the same conclusion.




Our object in this paper is to extend the main results in
\cite{MaRoWa} to the polynomial ring $\F_q[x]$ for any prime power
$q$, that is, the matrix form of Morrison's theorem (\ref{Mo}).
Denote $\F=\F_q$ and $\F[x]=\F_q[x]$ for short.
Let $\mathcal{M}:=M_{k\times
n}(\F[x])$ be the set of all $k\times n$ matrices over $\F[x]$ and
$\mathcal{M}_N$ be the subset of $\mathcal{M}$ consisting of all
matrices with entries in $\{f_0(x), f_1(x),\cdots,f_N(x)\}$. For any
subset $S\subseteq {\mathcal M}$, we define the {\bf natural
density} of $S$ in $\M$ to be
\begin{align}\label{density}
D(S):=\lim_{N\rightarrow\infty}\frac{|S\cap
\mathcal{M}_N|}{|\mathcal{M}_N|},
\end{align}
if the above limit (\ref{density}) exists. Note that, in the special
case $k=1$, the limit in (\ref{Mo density}) is just the limit of a
subsequence in (\ref{density}) with $N=q^d$.

To describe our result, i.e., to determine the natural density of
$k\times n$ unimodular matrices over $\F[x]$, we introduce the
following \textbf{$q$-zeta function}
\begin{align}
\zeta_q(j):=\prod_{f}(1-\frac{1}{q^{j\deg(f)}})^{-1}
=\prod_{m=1}^{\infty} (1-\frac{1}{q^{jm}})^{-\varphi_m},
\end{align}
where $f$ goes through all irreducible polynomials in $\F[x]$ and
$\varphi_m$ is the number of irreducible polynomials in $\F[x]$ with
degree $m$. In the above statements and what follows, {\bf by
irreducible polynomials we always mean monic irreducible polynomials
(the constant polynomial $1$ is excluded as usual).} Note that the $q$-zeta function is analogous with the Riemann zeta function. For more information, one can see Morrison \cite{Mo}.
Now we are in
the position to give our main result.
\begin{theorem} \label{result}
Let $E$ be the set of all $k\times n$ unimodular matrices over
$\F[x]$, then the natural density of $E$ is
\begin{align}
D(E)=\prod_{j=n-k+1}^{n}\prod_{f}(1-\frac{1}{q^{j\deg(f)}})
=\prod_{j=n-k+1}^{n}\prod_{m=1}^{\infty}
(1-\frac{1}{q^{jm}})^{\varphi_m}
=\prod_{j=n-k+1}^{n}\zeta_q(j)^{-1}.
\end{align}
\end{theorem}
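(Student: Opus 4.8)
The plan is to reduce unimodularity to a condition at each monic irreducible polynomial, compute the resulting local probabilities, and then show that the global density is the product of the local ones; the crux will be a tail estimate over irreducible polynomials of large degree, uniform in $N$.

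First, over the principal ideal domain $\F[x]$ a $k\times n$ matrix $A$ with $k\le n$ is unimodular if and only if the $\gcd$ of its $k\times k$ minors equals $1$, equivalently (Smith normal form, or a localization argument) its reduction $\bar A$ modulo every monic irreducible $f$ has full row rank $k$ over $\F[x]/(f)\cong\F_{q^{\deg f}}$. A uniformly random $k\times n$ matrix over a field of size $Q$ has full row rank with probability $\prod_{i=0}^{k-1}(1-Q^{i-n})=\prod_{j=n-k+1}^{n}(1-Q^{-j})$, so with $Q=q^{\deg f}$ the local probability at $f$ is $\rho_f:=\prod_{j=n-k+1}^{n}(1-q^{-j\deg f})$, and the expected value of $D(E)$ is $\prod_f\rho_f=\prod_{j=n-k+1}^{n}\prod_f(1-q^{-j\deg f})=\prod_{j=n-k+1}^{n}\zeta_q(j)^{-1}$; the product over $f$ converges for $j\ge2$ and equals $0$ for $j=1$ (the case $k=n$), consistently with the fact that an $n\times n$ polynomial matrix is almost never invertible since its determinant is almost never a unit.

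Next I would make this precise for finitely many primes at a time. Fix $L$, let $g_L$ be the product of all monic irreducibles of degree $\le L$ (of degree $M_L$), and put $E_L:=\{A\in\M:\bar A\bmod f$ has full row rank for every $f$ with $\deg f\le L\}$, so $E\subseteq E_L$. Since $f_m$ is obtained from the base-$q$ digits of $m$, the list $f_0,\dots,f_N$ splits into $\lfloor(N+1)/q^{M_L}\rfloor$ consecutive blocks, each a complete residue system modulo $g_L$, together with fewer than $q^{M_L}$ leftover terms; hence the reductions modulo $g_L$ of the independent entries of a uniform $A\in\M_N$ converge in distribution as $N\to\infty$ to independent uniform elements of $\F[x]/(g_L)$, and with the Chinese Remainder Theorem and the local count this gives $\lim_{N\to\infty}|E_L\cap\M_N|/|\M_N|=\prod_{\deg f\le L}\rho_f$. (This step is also what upgrades Morrison's subsequential statement along $N=q^d$ to arbitrary $N$.) Because $E\subseteq E_L$ we get $\limsup_N|E\cap\M_N|/|\M_N|\le\prod_{\deg f\le L}\rho_f$, and letting $L\to\infty$, $\limsup_N|E\cap\M_N|/|\M_N|\le\prod_f\rho_f$.

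The matching lower bound reduces to showing $\limsup_N|(E_L\setminus E)\cap\M_N|/|\M_N|\to0$ as $L\to\infty$, and this is the hard part. If $A\in E_L\setminus E$ then $g:=\gcd$ of the $k\times k$ minors of $A$ is a nonzero polynomial with no irreducible factor of degree $\le L$, hence with an irreducible factor of degree $>L$, and with all irreducible factors of degree $\le kd$, where $d\sim\log_q N$ bounds the degrees of the entries. A direct union bound over irreducibles $f$ with $L<\deg f\le kd$ of the probability that $f$ divides every minor will not close when $n$ is close to $k$: once $\deg f>d$ the entries reduce injectively modulo $f$, the per-prime probability is only about $q^{-(n-k+1)d}$, and there are up to roughly $q^{kd}$ such primes. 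I would instead argue by recursion on $k$. Fixing columns $1,\dots,k-1$ as a pivot, the $n-k+1$ minors $\Delta_j=\det(\text{columns }1,\dots,k-1,k+j)$ satisfy $\Delta_j=e\,\widetilde\Delta_j$, where $e$ is the $\gcd$ of the $(k-1)\times(k-1)$ minors of the pivot block, $M_i$ denotes the minor deleting row $i$, and $\widetilde\Delta_j=\sum_{i=1}^{k}\pm(M_i/e)A_{i,k+j}$ has coprime coefficients; conditionally on the pivot columns the $\widetilde\Delta_j$ are independent (they involve pairwise disjoint columns) and, by the coprimality of the coefficients, $\Pr[f\mid\widetilde\Delta_j]\le q^{-\deg f}+(N+1)^{-1}$. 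Hence an irreducible factor of $g$ of degree $>L$ divides either $e$ — in which case one recurses, $e$ being the analogous $\gcd$-of-maximal-minors invariant of the $(k-1)\times k$ block obtained from the transposed pivot, an instance with smaller $k$ in which at least two independent minors remain — or $\gcd(\widetilde\Delta_0,\dots,\widetilde\Delta_{n-k})$; in the latter case, since it divides $\widetilde\Delta_0$, it is one of at most $\deg\widetilde\Delta_0\le kd$ irreducible factors, and summing over those factors the product of the per-prime bounds for $\widetilde\Delta_1,\dots,\widetilde\Delta_{n-k}$ stays bounded uniformly in $N$ by $C\sum_{s>L}q^{-(n-k)s}\to0$ — the point being the deterministic inequality ``number of irreducible factors $\le$ degree'' in place of a union bound over all polynomials. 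The base case $k=1$ is $g=\gcd(a_1,\dots,a_n)$ with i.i.d.\ entries, handled by a union bound over monic polynomials of degree $\ge t$, giving $\Pr[\deg g\ge t]\le C_nq^{-(n-1)t}$; the degenerate events — all $k\times k$ minors vanishing, or the pivot block being rank-deficient over $\F(x)$ — have density $\to0$ and are absorbed separately. Finally, combining the last two steps, $\liminf_N|E\cap\M_N|/|\M_N|\ge\prod_{\deg f\le L}\rho_f-o_L(1)$, and letting $L\to\infty$ together with the upper bound yields $D(E)=\prod_f\rho_f=\prod_{j=n-k+1}^{n}\zeta_q(j)^{-1}$; the genuinely hard input is the uniform-in-$N$ tail estimate, i.e.\ the divisor-counting that neutralizes the primes of intermediate degree together with the organization of the recursion in $k$.
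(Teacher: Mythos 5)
Your reduction to local full-rank conditions and your computation of the density of $E_L$ via the block structure of the enumeration $f_0,\dots,f_N$ and the Chinese Remainder Theorem coincide with the paper's Lemma~\ref{finite}, and your upper bound $E\subseteq E_L$ is the same. Where you genuinely diverge is the tail estimate: the paper bounds $\limsup_N |(E_t\setminus E)\cap\M_N|/|\M_N|$ by $\sum_{\deg f>t}D(H_f)$, pulling a limsup inside an infinite sum of limiting per-prime densities (a step it does not justify, and which is delicate precisely in the regime of primes of degree exceeding $\log_q N$ that you single out), whereas you aim for a uniform-in-$N$ estimate via conditional independence of the pivot minors plus counting irreducible factors of an anchor minor, in the style of Maze--Rosenthal--Wagner. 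Your instinct that this is where the real work lies is sound.

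However, your key estimate does not close as written. In the branch where the bad irreducible divides $\gcd(\widetilde\Delta_0,\dots,\widetilde\Delta_{n-k})$, you sum the per-prime bound $(q^{-\deg f}+(N+1)^{-1})^{n-k}$ over the irreducible factors of $\widetilde\Delta_0$ of degree $>L$ and claim the total is at most $C\sum_{s>L}q^{-(n-k)s}$ uniformly in $N$. But $\widetilde\Delta_0$ has degree up to $kd$ with $d\approx\log_q N$, so it can have on the order of $kd/L$ irreducible factors of degree just above $L$, each contributing roughly $q^{-(n-k)L}$; your bound is therefore of size $(kd/L)\,q^{-(n-k)L}$, which grows with $N$ for fixed $L$. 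The factor-counting device only pays off when the number of candidate factors is bounded, i.e.\ for irreducibles of degree $>d$: a nonzero minor of degree at most $kd$ has at most $k$ such factors, and each divides the remaining $n-k$ conditionally independent minors with probability at most $(2q^{-d})^{n-k}\to 0$. For the intermediate range $L<\deg f\le d$ you should instead use the plain union bound over all irreducibles of degree $s$: your own conditioning gives per-prime probability at most $(2q^{-s})^{n-k+1}$ there (all $n-k+1$ pivot minors are available), and there are at most $q^{s}/s$ monic irreducibles of degree $s$, so that tail is $O(q^{-(n-k)L})$. With this split at degree $d$ (applied also inside your recursion on $k$, together with the degenerate events you already set aside), your argument closes; as written, using the single fixed threshold $L$ for the divisor count leaves a genuine gap.
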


\begin{remark}
In other words, the probability that a $k\times n$ polynomial matrix
can be completed by an $(n-k)\times n$ matrix into an $n\times n$
invertible matrix over $\F[x]$ is
$\prod_{j=n-k+1}^{n}\zeta_q(j)^{-1}$. And via introducing the q-zeta
function $\zeta_q(s)$, we obtain a similar formula to the one in
\cite{MaRoWa} for integer matrices
\[
D(\widetilde{E})=\prod_{j=n-k+1}^{n}\zeta(j)^{-1},
\]
where $\widetilde{E}$ denotes the set of all $k\times n$ unimodular matrices
over $\Z$.
\end{remark}

\begin{remark}
There is an interesting equation
\begin{align}\label{Mo equation}
\prod_{f}(1-t^{\deg(f)})^{-1}=\sum_{l=0}^{\infty}q^lt^l
=\frac{1}{1-qt},
\end{align}
where $f$ is over all irreducible polynomials.
For an interpretation of this
equation, one can see \cite{Mo}. Putting $t=q^{-j}, j\in\N$ in (\ref{Mo
equation}), we get
\begin{align}
\zeta_q(j)^{-1}=1-\frac{1}{q^{j-1}}.
\end{align}
Hence, we obtain an explicit formula for the natural density of $E$,
\begin{align}\label{explicit}
D(E)=(1-\frac{1}{q^{n-k}})(1-\frac{1}{q^{n-k+1}})\cdots
(1-\frac{1}{q^{n-1}}),
\end{align}
and an interesting identity concerning $\{\varphi_m\}$,
\begin{align}\label{interesting}
\prod_{m=1}^{\infty}
(1-\frac{1}{q^{jm}})^{\varphi_m}=1-\frac{1}{q^{j-1}},\quad\forall
j\in\N.
\end{align}
\end{remark}

\begin{remark} In particular, if we put $k=1$, Theorem \ref{result} shows that
the probability that $n$ polynomials over the finite field $\F$ to
be co-prime is
\[
\zeta_q(n)^{-1}=\prod_{m=1}^{\infty}
(1-\frac{1}{q^{nm}})^{\varphi_m}=1-\frac{1}{q^{n-1}}.
\]
This extends Dirichlet's density theorem (\ref{dirichlet}) and
yields the same conclusion as Morrison \cite{Mo} and Benjamin and
Bennett \cite{BeBe}.
\end{remark}

\begin{remark}
As an application, our work suggests that there may be a simple probabilistic algorithm for writing a projective $\mathbb{F}_q[x]$-module as a free module, please refer to \cite{Web} for more details.
\end{remark}

We end this section with some remarks on the square case $k=n$. One
may notice that our proof of Theorem \ref{result} in \S \ref{proof}
does not apply to this case (also, $\zeta_q(1)$ is not well
defined in this case). However, we have the following result,
\begin{proposition}\label{square case}
The natural density of $n\times n$ unimodular matrices over $\F_q[x]$
is $0$.
\end{proposition}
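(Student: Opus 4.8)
The plan is to replace the now-vacuous notion of "completion" by invertibility and then run a crude counting estimate. A square matrix $A\in\M=M_{n\times n}(\F[x])$ can only be extended to an $n\times n$ invertible matrix by $A$ itself, so $A$ is unimodular precisely when $\det A$ is a unit of $\F[x]$, i.e. $\det A\in\F^{\times}$, a set of $q-1$ elements. Writing $E$ for the set of $n\times n$ unimodular matrices and recalling that $\M_N$ consists of the matrices with entries among the $N+1$ polynomials $f_0,\dots,f_N$, so $|\M_N|=(N+1)^{n^2}$, it therefore suffices to show $|E\cap\M_N|/(N+1)^{n^2}\to 0$.

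First I would fix the top $n-1$ rows of $A$ and expand the determinant along the last row, $\det A=\sum_{j=1}^{n}(-1)^{n+j}g_j M_j$, where $g_1,\dots,g_n\in\F[x]$ are the entries of the last row and $M_1,\dots,M_n\in\F[x]$ are the $(n-1)\times(n-1)$ minors formed from the first $n-1$ rows (in particular the $M_j$ do not involve $g_1,\dots,g_n$). If all the $M_j$ vanish, then $\det A=0$ for every last row and there are no unimodular completions of the first $n-1$ rows. Otherwise, choose $j_0$ with $M_{j_0}\neq 0$; for each of the $(N+1)^{n-1}$ choices of the entries $g_j$ with $j\neq j_0$ and each target constant $c\in\F^{\times}$, the equation $\det A=c$ forces $g_{j_0}M_{j_0}$ to equal a single fixed polynomial, and since $\F[x]$ is an integral domain at most one $g_{j_0}\in\F[x]$ can satisfy this. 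Hence, for each admissible choice of the first $n-1$ rows, the number of last rows with entries among $f_0,\dots,f_N$ that make $A$ invertible is at most $(q-1)(N+1)^{n-1}$.

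Summing over the $(N+1)^{n(n-1)}$ choices of the first $n-1$ rows then gives
\[
|E\cap\M_N|\ \leq\ (q-1)\,(N+1)^{n(n-1)}\,(N+1)^{n-1}\ =\ (q-1)\,(N+1)^{n^2-1},
\]
so $|E\cap\M_N|/|\M_N|\leq (q-1)/(N+1)\to 0$ as $N\to\infty$, which proves the proposition.

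I do not expect a genuine obstacle; the only two points needing a word of care are the degenerate case in which all the cofactors $M_j$ vanish (it contributes no unimodular matrices, so it only improves the estimate) and the appeal to $\F[x]$ being an integral domain for the uniqueness of $g_{j_0}$. It is instructive to compare with Theorem~\ref{result}: the same cofactor expansion underlies the case $k<n$, but there the last row still has $n$ genuinely free entries subject only to a coprimality-type condition, whereas for $k=n$ the determinant condition is a single polynomial equation with at most finitely many solutions in the free variable — this collapse matches the formal value $\zeta_q(1)^{-1}=1-q^{0}=0$ of the density formula.
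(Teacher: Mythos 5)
Your argument is correct, and it is a genuinely different (and more self-contained) route than the one the paper has in mind. The paper omits the proof and refers to Lemma~5 of \cite{MaRoWa}: the intended argument is reduction modulo irreducible polynomials, i.e.\ observe that for $k=n$ unimodularity means $\det A\in\F^{\times}$, so $E\subseteq E_{\mathcal{P}_t}$ for every $t$, and then use Lemma~\ref{finite} with $k=n$ to get $\limsup_N |E\cap\M_N|/|\M_N|\leq D(E_{\mathcal{P}_t})=\prod_{j=1}^{n}\prod_{f\in\mathcal{P}_t}(1-q^{-j\deg(f)})\leq\prod_{m\leq t}(1-q^{-m})^{\varphi_m}$, which tends to $0$ as $t\to\infty$ (this is exactly the statement $\zeta_q(1)^{-1}=0$, the analogue of $\prod_p(1-1/p)=0$ in the integer case). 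Your proof instead works directly in $\F[x]$: identify unimodular with $\det A\in\F^{\times}$, expand the determinant along the last row, and use that $\F[x]$ is a domain so that for each choice of the remaining entries and each target unit $c$ there is at most one admissible value of $g_{j_0}$; the degenerate case of all cofactors vanishing contributes nothing, and the count $|E\cap\M_N|\leq (q-1)(N+1)^{n^2-1}$ follows. What your approach buys is elementarity and a quantitative rate, $|E\cap\M_N|/|\M_N|\leq (q-1)/(N+1)$, with no appeal to Lemma~\ref{finite}, to the equidistribution argument in its proof, or to the divergence of the product over irreducible polynomials; what the paper's route buys is uniformity of method, since it reuses exactly the machinery already set up for $k<n$ and explains the convention $\zeta_q(1)^{-1}=0$ that makes Theorem~\ref{result} formally valid at $k=n$. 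Both proofs are sound; yours could stand verbatim as the omitted proof of the proposition.
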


\begin{remark}
The proof of Proposition \ref{square case} is quite simple and
similar to the proof of Lemma 5 in \cite{MaRoWa}, so we omit the
details. If we make the convention that $\zeta_q(1)^{-1}=0$, Theorem
\ref{result} also holds for $k=n$.
\end{remark}



\section{Proof of Main result}\label{proof}

In this section, we shall give the proof of Theorem \ref{result}.
Before this, we need some preparations.

Let $\mathcal{P}$ be a finite set of irreducible polynomials in
$\F[x]$ and $\widehat{\mathcal{P}}$ be the set of all irreducible
polynomials in $\F[x]$. Denote by $E_\mathcal{P}$ the set of all
matrices $A\in\mathcal{M}=M_{k\times n}(\F[x])$
such that the $\gcd$ of all full rank
minors (i.e., minors of rank $k$) is co-prime to all elements in
$\mathcal{P}$. Recall that $E$ is the set of all unimodular matrices
in $\mathcal{M}$, that is, each matrix in $E$ can be extended to
an $n\ts n$ invertible matrix. It is well known that $A\in \M$ is
unimodular if and only if the $\gcd$ of all full rank minors is $1$
(See \cite{Qu}, \cite{Su} and \cite{YoPi} for more details). Thus we have $E=\bigcap_{\mathcal{P}} E_{\mathcal{P}}$.

\begin{lemma}\label{finite}
Let $E_{\mathcal{P}}$ be defined as above, then we have
\begin{align}
D(E_{\mathcal{P}})=\prod_{j=0}^{k-1}\prod_{f\in \mathcal{P}}(1-q^{(j-n)\deg(f)})=\prod_{j=n-k+1}^{n}\prod_{f\in \mathcal{P}}(1-q^{-j\deg(f)}).
\end{align}
\end{lemma}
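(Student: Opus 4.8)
The plan is to compute the density of $E_{\mathcal P}$ by reducing modulo the finitely many primes in $\mathcal P$ and counting matrices over the residue fields by rank. The key observation is that the condition defining $E_{\mathcal P}$ --- the gcd of all $k\times k$ minors being coprime to every $f\in\mathcal P$ --- depends only on the image of $A$ in $M_{k\times n}(\F[x]/(f))$ for each $f\in\mathcal P$, since the gcd of the minors is coprime to $f$ if and only if the minors do not all vanish modulo $f$, i.e.\ if and only if the reduced matrix $\bar A$ over the field $\F[x]/(f)\cong\F_{q^{\deg f}}$ has full rank $k$. By the Chinese Remainder Theorem the images in the various $M_{k\times n}(\F[x]/(f))$ are independent, so $D(E_{\mathcal P})=\prod_{f\in\mathcal P} p_f$, where $p_f$ is the probability that a uniformly random $k\times n$ matrix over the field $\mathbb{F}_{q^{\deg f}}$ has rank $k$.

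First I would make the reduction to a counting problem precise. Set $Q=q^{\deg f}$ for a fixed $f$, with $d_f=\deg f$. One shows that the natural density (in the sense of \eqnref{density}) of the set of $A\in\M$ whose reduction mod $f$ equals a prescribed matrix $B\in M_{k\times n}(\F[x]/(f))$ is $Q^{-kn}$; indeed, among $f_0,\dots,f_N$ the residues mod $f$ equidistribute over the $Q$ residue classes as $N\to\infty$ (this is where \eqnref{Mo equation}-type counting, or just the structure of the enumeration $\chi$, enters), and the $kn$ entries are independent. Combining this over all $f\in\mathcal P$ via CRT --- the moduli are pairwise coprime since the elements of $\mathcal P$ are distinct monic irreducibles --- gives that each of the $\prod_{f\in\mathcal P}Q^{kn}$ joint residue patterns is equidistributed with density $\prod_{f\in\mathcal P}Q^{-kn}$, so $D(E_{\mathcal P})=\prod_{f\in\mathcal P}p_f$.

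Next I would compute $p_f$. The number of $k\times n$ matrices of rank $k$ over a field with $Q$ elements (with $k\le n$) is the number of surjective linear maps $\F_Q^n\to\F_Q^k$, equivalently $(Q^n-1)(Q^n-Q)\cdots(Q^n-Q^{k-1})$, obtained by choosing the rows to be successively linearly independent. Dividing by $Q^{kn}$,
\begin{align}
p_f=\prod_{j=0}^{k-1}\frac{Q^n-Q^j}{Q^n}=\prod_{j=0}^{k-1}\bigl(1-Q^{j-n}\bigr)=\prod_{j=0}^{k-1}\bigl(1-q^{(j-n)\deg f}\bigr).
\end{align}
Taking the product over $f\in\mathcal P$ and re-indexing $j\mapsto n-j$ in the exponent yields
\begin{align}
D(E_{\mathcal P})=\prod_{f\in\mathcal P}\prod_{j=0}^{k-1}\bigl(1-q^{(j-n)\deg f}\bigr)=\prod_{j=n-k+1}^{n}\prod_{f\in\mathcal P}\bigl(1-q^{-j\deg f}\bigr),
\end{align}
which is the claimed formula.

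I expect the main obstacle to be the equidistribution step: one must check that reduction modulo $f$ sends the first $N+1$ polynomials $f_0,\dots,f_N$ (in the enumeration fixed by $\chi$) to residue classes that become uniform as $N\to\infty$, and that this holds jointly and independently across the entries and across the finitely many primes in $\mathcal P$. This is essentially bookkeeping --- for $N$ of the form $q^d$ it is exact by a clean counting argument, and for general $N$ one sandwiches between consecutive powers of $q$ and uses that the error term is $O(q^{d})$ against a total of $q^{d+1}$, so it washes out in the limit --- but it is the only place where genuine care is needed; the rank count and the CRT bookkeeping are routine.
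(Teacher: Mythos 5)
Your overall strategy is the same as the paper's: reduce modulo the finitely many irreducibles in $\mathcal{P}$, observe that $A\in E_{\mathcal{P}}$ exactly when each reduction is a full rank matrix over $\F[x]/(f)\cong\F_{q^{\deg f}}$, count full rank $k\times n$ matrices over a field with $Q$ elements as $\prod_{j=0}^{k-1}(Q^n-Q^j)$, and combine the primes via the Chinese Remainder Theorem. The rank criterion, the CRT bookkeeping, the counting of full rank matrices, and the exact equidistribution at special truncation points are all correct and match the paper.

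However, the one step you flag as delicate is handled incorrectly as stated. You propose to treat a general $N$ by sandwiching it between consecutive powers of $q$ and claim the error, being $O(q^{d})$ against a total of $q^{d+1}$, ``washes out''; but $q^{d}/q^{d+1}=1/q$ is a fixed constant, and since consecutive powers of $q$ differ by a multiplicative factor $q$, this sandwich only traps the ratio $|E_{\mathcal{P}}\cap\mathcal{M}_N|/|\mathcal{M}_N|$ within a factor of roughly $q^{\pm kn}$ of the target value; it does not establish the limit over all $N$, which is what the definition \eqnref{density} demands. The repair is to use a grid with bounded, not geometrically growing, gaps: the count is exact not only at $N+1=q^{d}$ but at every $N+1=mq^{d_{\mathcal P}}$, $m\in\N$, where $d_{\mathcal P}=\deg\bigl(\prod_{f\in\mathcal P}f\bigr)$, because $\{f_l:0\leq l\leq mq^{d_{\mathcal P}}-1\}=\{f_s(x)x^{d_{\mathcal P}}+f_t(x):0\leq s\leq m-1,\ 0\leq t\leq q^{d_{\mathcal P}}-1\}$ and, for each fixed $s$, the polynomials $f_t$ run bijectively over $\F[x]/(f_{\mathcal P})$; hence the entrywise reduction map on $\mathcal{M}_{mq^{d_{\mathcal P}}-1}$ is exactly $m^{kn}$-to-one. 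For arbitrary $N$, write $N+1=mq^{d_{\mathcal P}}+r$ with $0\leq r<q^{d_{\mathcal P}}$; the matrices involving one of the $r$ leftover polynomials number at most $rkn(N+1)^{kn-1}=O(N^{kn-1})$, which is negligible against $(N+1)^{kn}$, and since $r$ is bounded by the constant $q^{d_{\mathcal P}}$ the limit follows for the full sequence. This is exactly how the paper completes the proof.
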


\begin{proof}
Denote $f_{\mathcal{P}}:=\prod_{f\in\mathcal{P}}f$ and $d_{\mathcal
P}=\deg(f_{\mathcal P})$. For any positive integer $N$, consider the
maps,
\begin{align}\label{pi}
\pi:\quad\mathcal{M}_N\longrightarrow M_{k\times
n}(\F[x]/\left(f_{\mathcal{P}})\right)
\end{align}
and
\begin{align}
\psi:\quad M_{k\times
n}(\F[x]/\left(f_{\mathcal{P}})\right)\longrightarrow M_{k\times
n}\left(\prod_{f\in \mathcal{P}}\F[x]/(f)\right)\longrightarrow
\prod_{f\in \mathcal{P}}M_{k\times n}\left(\F[x]/(f)\right),
\end{align}
where $(f_{\mathcal{P}})$ and $(f)$ denote the ideals generated by $f_\P$ and $f$ respectively. Note that $\pi$ is the canonical projection
via modulo $f_\mathcal{P}$ and $\psi$ is a composition of two
canonical isomorphisms as $\F$ vector spaces.

Taken any $A\in \M_N$. It is easy to see that $A\in E_\P$ if and
only if the component of $\psi\circ\pi(A)$ in $M_{k\times n}(\F[x]/(f))$
is full rank for each $f\in\P$. Since $f\in\P$ is irreducible, we
know that $\F[x]/(f)\simeq\F_{q^{\deg(f)}}$. Let $F_{q^{\deg(f)}}$
denote the set of all full rank $k\times n$ matrices over the finite
field $\F_{q^{\deg(f)}}$. It is well known that
$|F_{q^{\deg(f)}}|=\prod_{j=0}^{k-1}(q^{n\deg(f)}-q^{j\deg(f)})$
(See \cite{LiNi} page 455, for example).

First suppose that $N=mq^{d_\P}-1$ for some $m\in\N$. Then it is
easy to see
\[
\{f_l(x):\,0\leq l\leq N\}=\{f_s(x)x^{d_\P}+f_t(x):\,0\leq s\leq
m-1, 0\leq t\leq q^{d_\P}-1\}.
\]
For any fixed $0\leq s\leq m-1$, the following
projection is one-to-one:
\[
\{f_s(x)x^{d_\P}+f_t(x):\,0\leq t\leq q^{d_\P}-1\}\longrightarrow
\F[x]/(f_\P)
\]
and the canonical
projection
\[
\{f_l(x):\,0\leq l\leq N\}\longrightarrow \F[x]/(f_\P)
\]
is $m$-to-one. As a result, the projection map $\pi$ in (\ref{pi})
is $m^{kn}$-to-one. Thus we obtain that
\begin{align}\label{lemma1}
|E_{\mathcal{P}}\cap\mathcal{M}_N|&=m^{kn}\cdot
\prod_{f\in\mathcal{P}}|F_{q^{\deg(f)}}|\nonumber\\
&=m^{kn}\cdot\prod_{f\in\mathcal{P}} \prod_{j=0}^{k-1}q^{n\deg(f)}(1-q^{(j-n)\deg(f)})\nonumber\\
&=(mq^{d_{\mathcal{P}}})^{kn}\cdot\prod_{f\in\mathcal{P}}
\prod_{j=0}^{k-1}(1-q^{(j-n)\deg(f)})
\end{align}

Now suppose that $N$ is any positive integer. There exists some
$m,r\in\Z_+$ such that $N+1=mq^{d_\P}+r$ with $0\leq r< q^{d_\P}$.
For convenience, set $\widetilde{N}:=mq^{d_\P}-1$. Then by the definition of the natural density, we have
\begin{align}\label{lemma2}
D(E_\mathcal{P})&=\lim_{N\rightarrow\infty}
\frac{|E_{\mathcal{P}}\cap\mathcal{M}_N|}{|\mathcal{M}_N|}\nonumber\\
&=\lim_{N\rightarrow\infty}
\frac{|E_{\mathcal{P}}\cap\mathcal{M}_{\widetilde{N}}|
+|E_{\mathcal{P}}\cap(\M_N-\mathcal{M}_{\widetilde{N}})|}{(N+1)^{kn}}.
\end{align}
Note that $|\M_N-\M_{\widetilde{N}}|\leq
rkn(N+1)^{kn-1},$ which gives that
\begin{align}\label{lemma3}
\lim_{N\rightarrow\infty}\frac{|E_{\mathcal{P}}
\cap(\M_N-\mathcal{M}_{\widetilde{N}})|}{(N+1)^{kn}}=0.
\end{align}
Thus, from (\ref{lemma2}), (\ref{lemma3}) and (\ref{lemma1}), we obtain
\begin{align*}
D(E_\mathcal{P})&=\lim_{N\rightarrow\infty}
\frac{|E_{\mathcal{P}}\cap\mathcal{M}_{\widetilde{N}}|}{(N+1)^{kn}}\\
&=\lim_{N\rightarrow\infty}\frac{(N+1-r)^{kn}
\prod_{f\in\mathcal{P}}
\prod_{j=0}^{k-1}(1-q^{(j-n)\deg(f)})}{(N+1)^{kn}}\\
&=\prod_{f\in\mathcal{P}}\prod_{j=0}^{k-1}(1-q^{-(n-j)\deg(f)})\\
&=\prod_{j=n-k+1}^{n}\prod_{f\in\mathcal{P}}(1-q^{-j\deg(f)}).
\end{align*}
This completes the proof of Lemma \ref{finite}.
\end{proof}

\vskip5pt \noindent\textbf{Proof of Theorem \ref{result}.~~} Suppose that
$k<n$. For any
irreducible polynomial $f\in\F_q[x]$, let $H_f\subseteq\mathcal{M}$
be the set of all matrices whose $\gcd$ of all full rank minors is
divisible by $f$. Denote $q_f:=q^{\deg(f)}$, then by Lemma
\ref{finite},
\begin{align*}
D(H_f)&=1-D(E_{\{f\}})\\
&=1-\prod_{j=n-k+1}^{n}(1-q^{-j\deg(f)})\\
&< 1-\left(1-\sum_{j=n-k+1}^nq_f^{-j}\right)\\
&<\sum_{j=n-k+1}^{\infty}q_f^{-j}=\frac{1}{q_f^{n-k}(q_f-1)}
\leq \frac{2}{q_f^2}.
\end{align*}

Let $\mathcal{P}_t$ be the set of all irreducible polynomials with degree no
more than $t$ and denote $E_t=E_{\mathcal{P}_t}$. Since
\[
(E_{t}\setminus E)\subseteq
\bigcup_{f\in\widehat{\mathcal{P}}\setminus\mathcal{P}_t} H_f,
\]
we have
\begin{align*}
\limsup_{N\rightarrow\infty}\frac{|(E_{t}\setminus E)\cap \mathcal{M}_N|}{|\mathcal{M}_N|}&\leq\limsup_{N\rightarrow\infty}
{\frac{|(\bigcup_{f\in\widehat{\mathcal{P}}\setminus\mathcal{P}_t}
H_f)\cap\mathcal{M}_N|}{|\mathcal{M}_N|}}\\
&\leq \limsup_{N\rightarrow\infty}{\frac{
\sum_{f\in\widehat{\mathcal{P}}\setminus\mathcal{P}_t}
|H_f\cap\mathcal{M}_N|}{|\mathcal{M}_N|}}\\
&\leq \sum_{f\in\widehat{\mathcal{P}}\setminus \mathcal{P}_t}\limsup_{N\rightarrow\infty}
{\frac{|H_f\cap\mathcal{M}_N|}
{|\mathcal{M}_N|}}\\
&=\sum_{f\in\widehat{\mathcal{P}}\setminus\mathcal{P}_t}D(H_f)
\leq \sum_{f\in\widehat{\mathcal{P}}\setminus\mathcal{P}_t}
\frac{2}{q_f^2}\\
&=\sum_{f\in\widehat{\mathcal{P}}\setminus\mathcal{P}_t}
\frac{2}{q^{2\deg(f)}}=\sum_{m=t+1}^{\infty}
\frac{2\varphi_m}{q^{2m}}.
\end{align*}
It is well known that all irreducible polynomials with degree $m$ can
divide $x^{q^{m}}-x$, which has no multiple roots. Thus
$m\varphi_m\leq q^m$ and
\begin{align*}
\limsup_{N\rightarrow\infty}{\frac{|(E_{t}\setminus E)\cap
\mathcal{M}_N|}{|\mathcal{M}_N|}}
&\leq\sum_{m=t+1}^{\infty}\frac{2}{mq^{m}}\\
&\leq\sum_{m=t+1}^{\infty}\frac{2}{q^{m}}=\frac{2}{q^t(q-1)}.
\end{align*}

Now note that $E\cap\mathcal{M}_N\subseteq E_t\cap\mathcal{M}_N$ and $E\cap\mathcal{M}_N
=E_t\cap\mathcal{M}_N-(E_t\setminus E)\cap\mathcal{M}_N$, which
imply that
\begin{align*}
\liminf_{N\rightarrow\infty}\frac{|E\cap\mathcal{M}_N|}
{|\mathcal{M}_N|}
&\geq \liminf_{N\rightarrow\infty}\frac{|E_t\cap\mathcal{ M}_N|}{|\mathcal{M}_N|}-\limsup _{N\rightarrow\infty} \frac{|(E_t\setminus E)\cap\mathcal{M}_N|}{|\mathcal{M}_N|}\\
&\geq D(E_t)-\frac{2}{q^t(q-1)},
\end{align*}
and
\begin{align*}
\limsup_{N\rightarrow\infty}
\frac{|E\cap\mathcal{M}_N|}{|\mathcal{M}_N|}
&\leq
\limsup_{N\rightarrow\infty}
\frac{|E_t\cap\mathcal{M}_N|}{|\mathcal{M}_N|}
-\liminf_{N\rightarrow \infty}
\frac{|(E_t\setminus E)\cap\mathcal{M}_N|}{|\mathcal{M}_N|}\\
&\leq D(E_t),
\end{align*}
for all $t\in\N$. Let $t$ tend to $\infty$, and recall that the
number of monic irreducible polynomials with degree $m$ in $\F[x]$
is $\varphi_m$. From Lemma \ref{finite} and the definition of
$q$-zeta function, we can conclude that
\begin{align*}
\lim_{N\rightarrow \infty}\frac{|E\cap
\mathcal{M}_N|}{|\mathcal{M}_N|}&=\lim_{t\rightarrow\infty}
D(E_t)\\
&=\lim_{t\rightarrow\infty}\prod_{j=n-k+1}^{n}\prod_{f\in
\mathcal{P}_t}(1-q^{-j\deg(f)})\\
&=\lim_{t\rightarrow\infty}\prod_{j=n-k+1}^{n}
\prod_{m=1}^{t}(1-q^{-jm})^{\varphi_m}\\
&=\prod_{j=n-k+1}^{n}
\prod_{m=1}^{+\infty}(1-q^{-jm})^{\varphi_m}
=\prod_{j=n-k+1}^{n}\zeta_q(j)^{-1}.
\end{align*}
This completes the proof of Theorem \ref{result}. \ \ \ \ \ \ \ \ \
\ \ \ \ \ \ \ \ \ \ \ \ \ \ \ \ \ \ \ \ \ \ \ \ \ \ \ \ \ \ \ \ \ \
\ \ \ \ \ \ \ \  $\Box$

\vskip10pt

\noindent{\small\textbf {Acknowledgments}  \ \ The authors are very grateful to the anonymous referee for his/her careful comments.
The work of X. G. was partially supported by NSFC (No. 11026155 and No. 11101380) and that of G. Y. was partially supported by NSFC (No. 11201431) and Foundations of Zhengzhou University.}


\begin{thebibliography}{99}

\bibitem{BeBe} A. T. Benjamin and C. D. Bennett, The Probability of Relatively Prime Polynomials. {\it Mathematics Magazine}, 80 (3), 2007, 196-202.

\bibitem{Di} G. L. Dirichlet, \"{U}ber die Bestimmung der mittleren Werthe in der Zahlentheorie. Abhandlungen K\"{o}niglich Preuss. Akad. Wiss., 1849, 69-83; G. Lejeune Dirichlet's Werke. II, Chelsea, 1969, 49-66.

\bibitem{Kac} M. Kac, {\it Statistical independence in
 probability, analysis and number theory.} The Carus Mathematical Monographs, No. 12. Published by  Mathematical Association of America. Distributed by John Wiley and Sons, Inc., New York, 1959.

\bibitem{Ku} J. Kubilius, {\it Probabilistic methods in the theory of numbers.} Translations of Mathematical Monographs, Vol. 11, American Mathematical Society, 1964.

\bibitem{KuSu} H. Kubota and H. Sugita, Probabilistic proof of limit theorems in number theory by means of adeles, {\it Kyushu J. Math.}, 56, 2002, 391-404.
\bibitem{LiNi} R. Lidl and H. Niederreiter, {\it Finite fields}. Cambridge University Press, Cambridge, 1997.

\bibitem{MaRoWa} G. Maze, J. Rosenthal and U. Wagner, Natural density of rectangular unimodular integer matrices. {\it Liner Algebra and its Applications}, 434 (5), 2011, 1319-1324.

\bibitem{Mo} K. E. Morrison, Random Polynomials over Finite Fields. {\href{http://www.calpoly.edu/$\sim$kmorriso/Research/RPFF.pdf}
    {\it http://www.calpoly.edu/$\sim$kmorriso/Research/RPFF.pdf}}, 1999.

\bibitem{Qu} D. Quillen, Projective modules over polynomial rings. {\it Invent. Math.}, 36, 1976, 167-171.

\bibitem{SuTa} H. Sugita and S. Takanobu, The probability of two $\F_q$-polynomials to be coprime. {\it Advanced Studies in Pure Mathematics}, 49, 2007, 455-478.

\bibitem{Su} A. A. Suslin, Projective modules over polynomial rings are free. {\it Soviet Math.},  17 (4), 1976, 1160-1164.

\bibitem{Te} G. Tenenbaum,  {\it Introduction to analytic and probabilistic number theory}. Cambridge Studies in Advanced Mathematics, Vol. 46, Cambridge University Press, Cambridge, 1995.

\bibitem{YoPi} D. C. Youla and P. F. Pickel, The Quillen-Suslin theorem
and the structure of $n$-dimensional elementary polynomial matrices.
{\it IEEE Trans. Circuits Systems}, 31 (6), 1984, 513-518.

\bibitem{Web} {\href{http://wwwb.math.rwth-aachen.de/QuillenSuslin/}
    {\it http://wwwb.math.rwth-aachen.de/QuillenSuslin}}

\end{thebibliography}
\end{document}